\date{\today}
\newcommand{\po}{\ar@{}[dr]|{\text{\pigpenfont R}}} \newcommand{\pb}{\ar@{}[dr]|{\text{\pigpenfont J}}}
\newtheorem{thm}{Theorem}[section]
\newtheorem{lemma}[thm]{Lemma}
\newtheorem{cor}[thm]{Corollary}
\newtheorem{prop}[thm]{Proposition}
\theoremstyle{definition}
\newtheorem{defn}[thm]{Definition}
\newcommand{\N}{{\rm N}}
\def\cG{\mathcal{G}}
\def\cC{\mathcal{C}}
\def\cW{\mathcal{W}}
\def\cC{\mathcal{C}}
\def\Wn{\mathcal{W}_\text{new}}
\def\W{\mathcal{W}}
\def\Wx{\mathcal{W}_{\times}}
\def\x{\times}
\def\fo{\text{fo}}
\title{Cofibration category structures on the category of  graphs}
	\date{}
\author{Shuchita Goyal \\
	\small Department of Mathematics\\[-0.8ex]
	\small Krea University\\[-0.8ex] 
	\small Sri City, India\\
	\small\tt shuckriya.goyal@gmail.com\\
	\and 
	Rekha Santhanam {\thanks{Corresponding author.}}\\
	\small Department of Mathematics\\[-0.8ex]
	\small Indian Institute of Technology Bombay\\[-0.8ex] 
	\small Mumbai, India\\
	\small\tt  reksan@iitb.ac.in}
\begin{document}	
	\maketitle
	\begin{abstract}
		In this article, we show that there is no cofibration category structure on the category of finite graphs with $\times$-homotopy equivalences as the class of weak equivalences. 
        Further, we show that it is not possible to enlarge the class of weak equivalences to get cofibration category structure on the category of finite graphs without including morphisms where domain and codomain have non-isomorphic stiff subgraphs.
        
  % Further, we show that there is no cofibration structure possible on the category of finite graphs if the weak equivalences are a subclass of graph maps between graphs with isomorphic stiff subgraphs. 
		
		{Keywords: } Cofibration category, Category of finite graphs, $\x$-homotopy equivalences, stiff graphs. \\
		MSC 2020: 05C10; 18N40; 55P99

	\end{abstract}
	
	\maketitle
	
	\section{Introduction}
	Let $\cG$ denote the category of finite undirected graphs without multiple edges, with graph morphisms being functions on vertices which preserve edges. 
    The category $\cG$ with $\x$-homotopy equivalences forms a category with weak equivalences, and we denote it by $(\cG, \Wx)$.
    % The class of $\x$-homotopy equivalences in $\cG$ defines a class of weak equivalences on the category of graphs, and we denote this category with weak equivalences as $(\cG, \x)$.  
    The notion of  $\x$-homotopy equivalences was defined by Dochtermann in \cite{x-htpy}, and  Chih and Scull  \cite{laura-scull}, gave a  construction of the homotopy category of graphs with respect to $\x$-homotopy equivalences.
	%	\rsnote{I think best to mention only Scull and Cheih here, that they construct the homotopy category of graphs.}
	
	   Let $\Wx$ denote the class of $\x$-homotopy equivalences in $\cG$. In \cite{lack-model-structure-RS}, we showed that there is no model structure on $\cG$ with $\Wx$ as the class of weak equivalences, irrespective of the choice for the class of cofibrations and fibrations. In the proof we used the fact that in a model category, acyclic cofibrations have left lifting property with respect to fibrations. 
	%	\sgnote{where exactly should we insert the citations \cite{droz,laura-scull}? \cite{droz} is quillen model structures on graphs  and \cite{laura-scull} is the one where $\cG$ is shown to have the structure of a 2-category.}\rsnote{I dont think we need to mention droz article here. If you want to you can always mention it at the end of the bit about our article. } 
	
	The advantage of defining a  model structure on $(\cG,\Wx)$ is that we can construct homotopy pushouts in $\cG$. This would potentially simplify computing the neighbourhood/hom complexes of graphs since they take double mapping cylinders of graphs to homotopy pushouts in spaces \cite{box-complex-matsushita}. %In this article, we study the existence of  cofibration category structures on $(\cG,\x)$.
 
 A cofibration category is a category with distinguished classes of weak equivalences and cofibrations satisfying compatibility conditions (cf. \Cref{def:cof cat}). This is a weaker notion than that of a  model structure, but several of the homotopy theoretic constructions can still be defined. In particular, we can define homotopy pushouts in a cofibration category. In a  recent article, Carranza, Doherty, Kapulkin, Opte, Saruzola and Wong \cite{CofCat-digraphs} gave a construction of a cofibration category structure on the category of digraphs.
 
 In this article, we prove that there is no choice of the class of cofibrations for which  $(\cG, \Wx)$ is a cofibration category 
 % \sout{(cf. \Cref{sec:x-htpy weak equiv} and \Cref{sec:new cofib class})} 
 (cf. \Cref{thm:we=x-htpy+cofb=subclass-of-induced-incl} and \Cref{thm:general-we=x-htpy}). 
The class of $\x$-homotopy equivalences classify graphs up to their stiff subgraphs. We then enlarge $\Wx$, the class of weak equivalences in $\cG$ to  $\W$ (cf. \Cref{def-new we}), a class of maps that preserve all the stiff subgraphs up to isomorphism. We show that this class $\W$  does not satisfy the 2 out of 6 property (cf. \Cref{prop:Wn not satisfying 2 out of 3}). 
  Moreover, extending this class of weak equivalences $\W$ to $\Wn$ that satisfies the $2$ out of $6$ property will include morphisms between graphs having non-isomorphic stiff subgraphs (cf. \Cref{thm:Endresult}). 
    % Further, we show that any class of weak equivalences which consists of morphisms with isomorphic underlying stiff subgraphs is not compatible with any cofibration structure. 
    This then implies that there is no cofibration structure on finite graphs with weak equivalences classifying stiff subgraphs.

 % \sgnote{maybe we should break section 2 in prelim and cofib as .... and then add sec 3 to the new sec 2}

 \section{Preliminaries}
    We consider the category of finite graphs $\cG$, namely, finite undirected graphs without multiple edges with morphisms as vertex set maps preserving edges. 
	An edge in a graph with the same endpoints is called a {\it loop,} and the endpoint vertex is often referred to as a {\it looped vertex}.
	A graph without multiple edges and loops is called a {\it simple graph}.

	\begin{defn}\label{defn-fold,stiff,contractible}
		A vertex $v \in G$ is said to {\it fold} to a vertex $v' \in G$, if every neighbour of $v$ is also a neighbour of $v'$, {\it i.e.}, if the set of neighbours of $v$, $\N(v) \subseteq \N(v')$. 
		We note that the map $f : G \to G-v$ that maps each vertex (other than $v$) of $G$ to itself, and $v$ to $v'$ is a graph map. 
		In such a case, we call $f$ a {\it fold map} that folds  $G$ to $G-v$. 
		
		A graph is {\it stiff} if there is no fold in that graph. 
		
	\end{defn}

	\begin{defn}
		Let $G , H \in \cG$ be two graphs. The {\it (categorical) product} of $G$ and $H$ is defined to be the graph $G \x H$ whose vertex set is the Cartesian product, $V(G) \x V(H)$, and $(g,h)(g',h') \in E(G \x H)$ whenever $gg' \in E(G)$ and $ hh' \in E(H) $.
	\end{defn}
	
	For $n \in \mathbb{N}$, let $I_n$ be the graph with vertex set $\{0,1,\dots,n\}$ and edge set $\{ij : |i-j| \le 1\}$. We note that the graph  $I_n$ folds down to a single looped vertex for any, $n \in \mathbb{N}$ and is said to be {\it contractible}.

	\begin{defn}\cite{x-htpy}
		Two graph maps $f,g : A \to B$ are said to be {\it $\x$-homotopic}, if for some $n \in \mathbb{N}$, there exists a graph map $F : A \x I_n \to B$ such that $F|_{A\x \{0\}} = f$ and $F|_{A\x \{n\}} = g$. Two $\x$-homotopic maps are denoted as $f \simeq_{\x} g$. A graph map $f : A \to B$ is a {\it $\x$-homotopy equivalence}, if there exists a graph map $g: B \to A$ such that $gf \simeq_{\x} 1_A$ and $fg \simeq_{\x} 1_B$, where $1_X$ denotes the identity map on $X$.
	\end{defn}

	By a {\it stiff subgraph} of a graph $G$,  we always mean a subgraph of $G$, which is stiff, that $G$ folds down to and denote it by $G_s$. We note that a graph $G$, via different sequences of folds, may fold down to different subgraphs of it, but all of these subgraphs are isomorphic \cite[Proposition 6.6]{x-htpy}.
	
	\begin{defn}\cite{cofibration-category-defn}\label{def:cof cat}
		A category with weak equivalences and cofibrations as distinguished subclasses of morphisms is called a {\it cofibration category} if the following axioms are satisfied:
		\begin{enumerate}
			\item Weak equivalences satisfy ‘2 out of 6’ property, {\it i.e.}, for any three composable morphisms $f: A \to B,\  g : B \to C, \ h: C \to D$, if $hg,gf$ are weak equivalences, then so are $f,g,h$ and $hgf.$
			\item Every isomorphism of the category is an acyclic cofibration, {\it i.e.} a map which is a weak equivalence and a cofibration.
			\item An initial object exists in the category.
			\item Every object X of the category is cofibrant, {\it i.e.}, the map from the initial object to $X$ is a cofibration.
			\item Cofibrations and acyclic cofibrations are preserved under cobase change.
			\item Every morphism in the category factors as a composite of a cofibration followed by a weak equivalence.
		\end{enumerate}
	\end{defn} 

	\section{Non existence of cofibration category structure on \texorpdfstring{$(\cG,\Wx)$}{(G,wx)} }\label{sec:new cofib class}
	
	We now consider the category of graphs, $\cG$, with $\x$-homotopy equivalences. An empty graph is a graph whose vertex set and edge set are empty. One can easily verify that an empty graph is an initial object in $\cG$, and hence the axiom (3) of \Cref{def:cof cat} is satisfied. Recall that $\Wx$ denotes the class of $\x$-homotopy equivalences.

	\begin{lemma}\label{2 out of 6 ppty proof}
		The class  $\Wx$ satisfies the `$2$ out of $6$' property.
	\end{lemma}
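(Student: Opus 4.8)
The plan is to reduce the statement to the elementary categorical fact that the class of isomorphisms in \emph{any} category satisfies the `$2$ out of $6$' property, and then transport this back to $\cG$ by passing to the homotopy category, in which the $\x$-homotopy equivalences are exactly the isomorphisms.

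First I would recall that $\simeq_{\x}$ is a congruence on $\cG$: it is an equivalence relation on each hom-set, and it is compatible with composition, so that $f \simeq_{\x} f'$ and $g \simeq_{\x} g'$ (with $g,g'$ composable with $f,f'$) imply $gf \simeq_{\x} g'f'$. This is established in \cite{x-htpy} and is precisely the input used by Chih and Scull \cite{laura-scull} to build the homotopy category of graphs. Consequently there is a quotient category $h\cG$ having the same objects as $\cG$ and morphisms $\Hom_{h\cG}(A,B) = \Hom(A,B)/\!\simeq_{\x}$, together with a quotient functor $q : \cG \to h\cG$ that is the identity on objects. By the very definition of a $\x$-homotopy equivalence, a morphism $f$ lies in $\Wx$ if and only if $q(f)$ is an isomorphism in $h\cG$: a homotopy inverse $g$ of $f$ is exactly a two-sided inverse of $q(f)$, since $gf \simeq_{\x} 1_A$ and $fg \simeq_{\x} 1_B$ become $q(g)q(f) = \mathrm{id}$ and $q(f)q(g) = \mathrm{id}$.

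Next I would prove the purely categorical statement. Let $\varphi, \psi, \chi$ be composable morphisms in an arbitrary category such that $\psi\varphi$ and $\chi\psi$ are isomorphisms. Then $\varphi(\psi\varphi)^{-1}$ is a right inverse of $\psi$ and $(\chi\psi)^{-1}\chi$ is a left inverse of $\psi$; a morphism possessing both a left and a right inverse is invertible, so $\psi$ is an isomorphism. Writing $\varphi = \psi^{-1}(\psi\varphi)$ and $\chi = (\chi\psi)\psi^{-1}$ exhibits $\varphi$ and $\chi$ as composites of isomorphisms, hence isomorphisms, and therefore so is $\chi\psi\varphi$. Thus isomorphisms satisfy `$2$ out of $6$'.

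Finally I would combine the two observations. Given composable $f : A \to B$, $g : B \to C$, $h : C \to D$ in $\cG$ with $hg, gf \in \Wx$, applying $q$ yields that $q(h)q(g) = q(hg)$ and $q(g)q(f) = q(gf)$ are isomorphisms in $h\cG$. The previous paragraph then forces $q(f), q(g), q(h)$ and $q(hgf)$ all to be isomorphisms, whence $f, g, h$ and $hgf$ all lie in $\Wx$. The only real obstacle is the bookkeeping needed to guarantee that $\simeq_{\x}$ is a genuine congruence, so that $h\cG$ and $q$ are well defined; once this is secured the argument is entirely formal and requires no further graph-theoretic input.
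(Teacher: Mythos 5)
Your proof is correct, but it takes a more structural route than the paper. The paper argues entirely at the level of explicit homotopies: from the data witnessing $gf, hg \in \Wx$ it manufactures $\alpha g\beta$ as a two-sided homotopy inverse of $hgf$, then observes that $\beta h$ and $f\alpha$ are left and right homotopy inverses of $g$, and chains the homotopies together by hand. Your argument is the same algebra --- the formulas $\varphi(\psi\varphi)^{-1}$ and $(\chi\psi)^{-1}\chi$ are exactly the paper's $f\alpha$ and $\beta h$ --- but you factor it through the quotient functor $q:\cG \to h\cG$ and invoke the purely categorical fact that isomorphisms in any category satisfy `$2$ out of $6$'. What this buys you is a clean separation of concerns: all the graph-theoretic content is concentrated in the single (cited) fact that $\simeq_{\x}$ is a congruence so that $h\cG$ exists and $\Wx = q^{-1}(\text{isos})$, and the rest is formal. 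What the paper's version buys is self-containedness: it never needs the homotopy category to be constructed, only the elementary facts that homotopies can be composed and whiskered. One point worth making explicit in your write-up: to conclude that $q(f)$ invertible implies $f \in \Wx$ you are implicitly using that $q$ is full (so the inverse of $q(f)$ lifts to an actual graph map $g$ with $gf \simeq_{\x} 1_A$ and $fg \simeq_{\x} 1_B$); this is automatic for a quotient functor but deserves a sentence. With that noted, your proof is complete and correct.
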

	
	\begin{proof}
		Consider the following diagram in $\cG$ 
		$$\xymatrix{
			A\ar[r]^f & B \ar[r]^g & C\ar[r]^h & D\\
		}$$
		where $gf,\ hg \in \Wx$. We show that all of $f,g,h,hgf$ belongs to $\Wx$. Since $gf \in \Wx$, there exists graph maps $\alpha : C \rightarrow A$, $F : C \x I_{n_1} \to C$ and $ F': A \x I_{n_2} \to A $ for some natural numbers $n_1,n_2 $ such that $F : gf \alpha \simeq_{\times} 1_C$, $F' : \alpha gf \simeq_{\times} 1_A$. Similarly for $hg$, there exists $\beta : D \rightarrow B$, $G : D \x I_{n_3} \to D$ and $G': B \x I_{n_4} \to B$ for some naturals $n_3, n_4$ such that $G : hg \beta \simeq_{\times} 1_D$, $G' : \beta hg \simeq_{\times} 1_B$.
		
		Now consider the map $\alpha g \beta : D \rightarrow A$. Using associativity of graph maps, $	(\alpha g \beta) (hgf)   =\alpha  g (\beta h g) f$. Since $G' :  \beta hg \simeq_{\times} 1_B$, $\alpha g G' f: \alpha  g (\beta h g) f \simeq_{\x}  \alpha g 1_B f =  \alpha gf \simeq_{\x} 1_{A}  \ ( via\ F' )$. 
		%		\begin{align*}
			%			(\alpha g \beta) (hgf)     & =\alpha  g (\beta h g) f \\
			%			& \simeq_{\times} \alpha g 1_B f \ ( via\ \alpha g G' f)\\
			%			& = \alpha gf\\
			%			& \simeq_{\times} 1_A \ ( via\ F' ).
			%		\end{align*}
		Again using the associativity, we get $(hgf)(\alpha g \beta)    = h (gf \alpha) g \beta $.  Since $F : gf \alpha \simeq_{\times} 1_C$, $ h (gf \alpha) g \beta \simeq_{\times} h 1_C g \beta = h g \beta   \simeq_{\times} 1_D \ ( via\ G)$.
		Therefore, $hgf$ is a $\x$-homotopy  equivalence and thus $hgf \in \Wx$. 
		
		Next, we note that the composition of two $\x$-homotopy equivalences is again a $\x$-homotopy equivalence and a map {\it i.e.} $\times$-homotopic to a weak equivalence is itself a weak equivalence.
		Consider $\beta : D\rightarrow B$. Since the map $hg$ is a $\times$-homotopy equivalence and $G : hg \beta \simeq_{\x} 1_{D}$, therefore $\beta$ is also a $\times$-homotopy equivalence, {\it i.e.} $\beta \in \Wx$. So, $\beta (hgf) = (\beta hg) f \simeq_{\times} f$ implying that $f \in \Wx$. A similar argument shows that $h $ is also a $\x$-homotopy equivalence. Finally, observe that $\beta h  : C \to B$ is a left $\x$-homotopy inverse of $g$ while $f \alpha : C \to B$ is a right inverse. Therefore, $\beta h \simeq_{\times} (\beta h) g (f \alpha)$ and $(\beta h) g (\beta h) \simeq_{\times} f \alpha$ implies that $ g (\beta h) \simeq_{\times} g (\beta h) g (f \alpha) \simeq_{\times} g (f \alpha) \simeq_{\times} 1_{C}$, and hence $g$ is also a weak equivalence.
		
		Therefore $\Wx$ satisfies `2 out of 6' property.
	\end{proof}

        We now prove that if there is a cofibration category structure on $(\cG, \Wx)$, then the class of cofibrations is a subclass of inclusions.

    \begin{prop} \label{lem:cofib be inclusions}
		Let $f: A \to B$ be a graph map which is a $\x$-homotopy equivalence and not injective. 
		Then there exists a graph $C$ and a map $g: A \to C$ such that the cobase change map from $C \to P$ is not a $\x$-homotopy equivalence, where $P$ is the pushout of the diagram $B  \xleftarrow{f} A \xrightarrow{g} C$. 
	\end{prop}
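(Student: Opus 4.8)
The plan is to exploit the single mechanism behind every non-injective weak equivalence: a cobase change along $f$ identifies any two vertices lying in the same fibre of $f$. Since $f$ is not injective, fix $a_1 \neq a_2$ with $f(a_1) = f(a_2) =: b$. I would take $C := A + a_1a_2$, the graph obtained from $A$ by adjoining a single edge between $a_1$ and $a_2$, and let $g : A \to C$ be the inclusion (a graph map, since it only forgets one edge). I first record two facts to be used throughout: a looped vertex can be folded only onto another looped vertex, so a graph carrying a loop always retains a loop in its stiff subgraph; consequently, as $f$ forces $A_s \cong B_s$, if $A$ is simple then so are $B$ and $B_s$. In particular $a_1 \not\sim a_2$ in $A$ (otherwise $f$ would place a loop at $b$), so $C$ is again simple.

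Next I would identify the pushout $P$ of $B \xleftarrow{f} A \xrightarrow{g} C$. As $g$ is a bijection on vertex sets, $V(P) = V(B)$ with both legs given by $f$, and the edges of $P$ are the images of the edges of $B$ and of $C$. Every edge of $A$ maps under $f$ to an edge already present in $B$, while the one new edge $a_1a_2$ maps to $f(a_1)f(a_2) = bb$, that is, a loop at $b$. Hence $P \cong B + \mathrm{loop}_b$, and the cobase change $C \to P$ is $f$ together with the collapse of $a_1a_2$ onto that loop.

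To finish I would compare stiff subgraphs, using that a map lies in $\Wx$ only if its source and target have isomorphic stiff subgraphs. Since $C$ is simple, so is $C_s$, because folding a simple graph again yields a simple graph. On the other hand $b$ is the unique looped vertex of $P$, and by the fact recorded above it can never be folded away, so $b$ persists, loop and all, into $P_s$. Therefore $P_s$ has a loop while $C_s$ does not, whence $C_s \not\cong P_s$ and the cobase change map $C \to P$ is not a $\times$-homotopy equivalence, as required.

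The step I expect to be the main obstacle is the treatment of graphs that genuinely carry loops. The clean invariant above needs $b$ to be the only looped vertex of $P$, which is guaranteed only when $B$ is simple; if $A$ (hence $B$) already has loops, the added edge may create no new loop, or the loop at $b$ may fold onto a pre-existing looped vertex, and loop-presence no longer separates $C_s$ from $P_s$. I would handle this by isolating the folding-stability statement (the number of looped vertices in the stiff subgraph is a $\times$-homotopy invariant) as a preliminary lemma, and, in the loopful case, replacing the single edge by a long induced path on fresh vertices joining $a_1$ and $a_2$: after the pushout identifies $a_1$ with $a_2$ this path closes into an induced cycle whose length I can force to exceed $|V(B)|$, producing a stiff feature of $P$ with no counterpart in $C$. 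Verifying that this attached path neither folds nor interferes with the rest of the graph is the fiddly part of that branch.
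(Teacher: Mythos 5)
Your first case ($A$ simple) is exactly the paper's argument: add the edge $a_1a_2$ (legitimate since $B$ simple forces $a_1a_2\notin E(A)$), observe that the pushout is $B$ with a loop at $b$, and separate $C_s$ from $P_s$ by the fold-invariance of carrying a loop. That part is complete and correct. Where you diverge is the loopful case, and that is also where your proposal stops being a proof. The paper's device there is to form $C=(A\sqcup C_7)/a_1\sim 1$ together with the edge $a_1a_2$ (adding loops to the new vertices when $a_1$ or $a_2$ is already looped), so that the pushout contains a $C_7$ with a looped vertex; the distinguishing feature is again loop-based, and the paper disposes of accidental matches by noting that any looped $C_7$ already present in $C$ must come from $A$, hence from $B$, hence reappears in $P$, so $P$ has strictly more of them. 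Your long-induced-path variant is the same kind of move, but the two things you defer --- that the new cycle of length exceeding $|V(B)|$ survives into $P_s$, and that nothing comparable appears in $C_s$ --- are the entire content of that case. In particular, attaching the path changes $\N(a_1)$ and $\N(a_2)$, which can enable or disable folds in the $A$-part of $C$ relative to the $B$-part of $P$, so ``no counterpart in $C$'' needs an actual counting argument of the sort the paper gives for its looped $C_7$; a cycle-length invariant alone does not obviously close this, since $C$ (through $A$) may already contain long induced cycles. So: same approach and correct on the main branch, a genuine but fillable gap on the branch you yourself flag, where the paper's loop-marked $C_7$ is the cleaner choice of distinguishing feature.
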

	\begin{proof}
		Let $f(a_1)=f(a_2)$ for two distinct vertices $a_1, a_2 \in V(A)$.
		
		If $A$ is simple, then $B$ is  simple, as $f$ is a $\x$-homotopy equivalence. 
		Note that $f(a_1)$ is not a looped vertex, therefore $a_1 a_2 \notin E(A)$. 
		Let $C$ be the graph with $V(C)=V(A)$ and $E(C)=E(A) \cup \{a_1 a_2\}$. 
		Let $g: A \to C$ be the graph inclusion. 
		Then $C$ is simple, but the pushout of the diagram $B  \xleftarrow{f} A \xrightarrow{g} C$ is not simple and hence not $\x$-homotopy equivalent to $C$.
		
		If $A$ is not simple, but neither of $a_1$ or $a_2$ is looped, then we construct $C$ as $C=(A\sqcup C_7)/ a_1 \sim 1, $ where $1$ is a vertex in $C_7$ with an edge added between $a_1, a_2$. Then the pushout has a $C_7$ with a looped vertex as a stiff subgraph. Any other copy of such a graph in $C$ has to arise from one in $A$ and therefore $B$. This implies $C$ is not $\x$-homotopy equivalent to the pushout. 
		
		If either $a_1$ or $a_2$ in $A$ is looped, then we can add a loop at $1$ or $a_2$ in $C$ appropriately to repeat the previous argument. 
	\end{proof}
	
	    \begin{cor}\label{thm:general-we=x-htpy}
	If there exists a cofibration structure on $(\cG,\Wx)$ then cofibrations have to be a subclass of induced inclusions. 
 \end{cor}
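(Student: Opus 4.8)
The plan is to argue by contraposition, using axiom (5) of \Cref{def:cof cat} as the only real lever: the cobase change of an acyclic cofibration is again an acyclic cofibration, hence in particular a $\x$-homotopy equivalence, and \Cref{lem:cofib be inclusions} is built precisely to defeat this. So the first step is to read off the Proposition in the form I actually need, namely that no non-injective $\x$-homotopy equivalence can be an acyclic cofibration. Indeed, if $f\colon A\to B$ were a non-injective $\x$-homotopy equivalence and a cofibration, the Proposition supplies a map $g\colon A\to C$ whose pushout $P$ has a cobase change $C\to P\notin\Wx$; but axiom (5) forces this cobase change to be an acyclic cofibration, hence a weak equivalence, a contradiction. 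This already pins down the injectivity half of ``induced inclusion'' for acyclic cofibrations.

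For the fullness half I would run a parallel pushout argument. Suppose a cofibration $f\colon A\to B$ is injective but not induced, so there are vertices $a_1,a_2\in V(A)$ with $a_1a_2\notin E(A)$ yet $f(a_1)f(a_2)\in E(B)$. Let $C=A\cup\{a_1a_2\}$ and let $g\colon A\to C$ be the edge-adding inclusion. Because the edge $f(a_1)f(a_2)$ already lies in $B$, the pushout of $B\xleftarrow{f}A\xrightarrow{g}C$ is $B$ itself, so $f$ factors as $A\xrightarrow{g}C\xrightarrow{\bar g}B$ with $\bar g$ the cobase change of $f$. If $f$ is an acyclic cofibration then $\bar g\in\Wx$ by axiom (5), and since $\Wx$ satisfies $2$ out of $6$, hence $2$ out of $3$, by \Cref{2 out of 6 ppty proof}, and $f\in\Wx$, we would get $g\in\Wx$. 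As adding an edge between two non-adjacent vertices generically changes the stiff subgraph, this is the sought contradiction; the one point requiring care is that $g$ need not always be a non-equivalence (for instance when a dominating looped vertex is present), so $C$ must be chosen so that its stiff subgraph provably differs from that of $A$, in the spirit of the looped-vertex and $C_7$ constructions in the proof of \Cref{lem:cofib be inclusions}.

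The main obstacle is bridging from acyclic cofibrations to arbitrary cofibrations: both arguments above only bite when axiom (5) forces the relevant cobase change into $\Wx$, which it guarantees only for acyclic cofibrations, whereas a general cofibration need not be a weak equivalence. To close this gap I would use the remaining axioms in concert --- every isomorphism is an acyclic cofibration (axiom (2)), every object is cofibrant (axiom (4)), cofibrations are stable under composition and cobase change, and every map factors as a cofibration followed by a weak equivalence (axiom (6)) --- to realise the fold maps and edge-additions witnessing a failure of ``induced inclusion'' inside the acyclic-cofibration class, where the Proposition applies. Making this propagation airtight, rather than the individual pushout computations, is where I expect the real work to lie.
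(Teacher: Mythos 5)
Your first step is exactly the paper's: \Cref{lem:cofib be inclusions} combined with axiom (5) of \Cref{def:cof cat} (cobase change preserves acyclic cofibrations) rules out non-injective acyclic cofibrations, and the paper's proof opens with precisely this deduction. For the ``induced'' half the paper does not redo any construction; it cites Remark 3.2 of \cite{lack-model-structure-RS}, which states that for every non-induced inclusion $f$ that is a $\x$-homotopy equivalence there exist $C$ and $g:A\to C$ whose cobase change is not in $\Wx$ --- i.e.\ essentially the statement you are trying to reprove from scratch. Your version (add the missing edge to $A$ to form $C$, observe the pushout is $B$ itself, factor $f=\bar g\circ g$, and use axiom (5) together with the 2-out-of-3 consequence of \Cref{2 out of 6 ppty proof} to force $g\in\Wx$) is a legitimate variant of the same cobase-change mechanism, and your worry is well placed: when $A$ has a dominating looped vertex, $g$ genuinely is in $\Wx$ and no contradiction arises, so the $C_7$-style attachments are needed. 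As written, this half of your argument is a sketch with an acknowledged hole, which the cited remark is what fills in the paper.

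The obstacle you isolate at the end is the more serious one, and you do not close it: both \Cref{lem:cofib be inclusions} and the cited remark hypothesize that $f$ is a $\x$-homotopy equivalence, so the cobase-change arguments only constrain \emph{acyclic} cofibrations, whereas the corollary asserts the conclusion for all cofibrations. ``Use the remaining axioms in concert'' is not an argument, and it is not obvious how to make one, since axiom (5) gives no control over the weak-equivalence class of the cobase change of a non-acyclic cofibration. You should be aware, however, that the paper's own proof does not address this point either --- it passes from the acyclic case to the general case silently --- even though \Cref{thm:we=x-htpy+cofb=subclass-of-induced-incl} later applies the corollary to the cofibration part of an axiom-(6) factorization of a map that is not a weak equivalence. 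So, measured against what the paper actually proves, your proposal reproduces it by the same route; measured against the statement as written, both your proposal and the paper leave the same step, from acyclic cofibrations to arbitrary cofibrations, unjustified.
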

 \begin{proof} 
    By \Cref{lem:cofib be inclusions} we know that cofibrations have to be a subclass of inclusions. 
	In \cite[Remark 3.2]{lack-model-structure-RS}, we have shown that if for any  $f : A \to B$ which is a   $\x$-homotopy equivalence and a non-induced graph inclusion, there is a graph $C$ and a graph map $g : A \to C$ such that the cobase change of $f$ along $g$ is not a $\x$-homotopy equivalence. 

     Thus any class of cofibrations on $(\cG,\x)$, if it exists, has to be a subclass of induced inclusions. 
	%Thus it is sufficient to show that any class of cofibrations, if it exists, must be a subclass of induced inclusions. The proof then follows from  \Cref{lem:cofib be inclusions}.	
    \end{proof}

 % \sgnote{shuffling the order of things in the rest of this section. The old order is appended later to compare and see.}

% \sgnote{==============================}

     If the class of cofibrations is induced inclusions, then for any graph map $f: A \to B$, we can construct a new graph $M_f$ such that $A \subseteq M_f$ is an induced subgraph and $M_f \simeq_{\x} B$. Then this construction gives a factorisation of any graph map $f: A \to B$ as a composition of an induced inclusion followed by a weak equivalence. We give this construction next.
    % Given a graph map $f: A \to B$, we construct a new graph $M_f$ such that $A \subseteq M_f$ is an induced subgraph and $M_f \simeq_{\x} B$. If the class of cofibrations is induced inclusions, then this construction gives a factorisation of any graph map $f: A \to B$ as a composition of a cofibratopn followed by a weak equivalence. We give this construction next.
	
    \begin{defn}
		Let $A, G \in \cG$ be graphs, and $f : A \to G$ be a graph map. The \textit{quotient} of $G$ by $f(A)$, denoted by $G / f(A)$, is the graph defined as, 
        \begin{align*}
            V(G/f(A))=&   V(G)/ V(f(A)), \\
            E(G/f(A)) = &   \{ \{[x],[y]\} \subseteq  V(G)/ V(f(A)) \ | \ xy \in E(G)\}.
        \end{align*}
            
		% \noindent $$V(G/f(A))=  V(G)/ V(f(A)), \ 
		% E(G/f(A)) = \{ \{[x],[y]\} \subseteq  V(G)/ V(f(A)) \ | \ xy \in E(G)\}.$$
	\end{defn}
	
	Let $f:A\to B$ be a graph map. We define $M_f$ to be the quotient graph such that $V(M_f) = V\Big ((A\x I_1 )\bigsqcup\limits_i B \Big ) = \Big (V(A\x I_{1})\cup V(B) \Big )/f(a)\sim (a,0)$ and
	$[x][y]\in E(M_f)$ if and only if there exist $x'\in [x], y'\in [y]$ such that
	$x'y'\in E(A\x I_n)\cup E(B)$.
	
	It is easy to see that $A$ is an induced subgraph of $M_f$. Furthermore, for every $a \in V(A)$, the vertex $[f(a)]$ folds to the vertex $(a,1) \in V(M_f)$ and hence $M_f \simeq_{\x} B$.

 	However we have shown that if the class of weak equivalences are the $\x$-homotopy equivalences \cite[Lemma 3.2 and Proposition 3.1]{lack-model-structure-RS},  then cofibrations need to satisfy further properties, which are stated below.

    \begin{defn}\label{quasicof}
        An induced inclusion $A \subseteq B$ of graphs is a {\it quasi-cofibration} if $B$ can be folded to its stiff subgraph via a composition of relative and restricted folds in the codomain with respect to the domain graph, and no other folds are possible at any stage.
    \end{defn}
	
	Let the class of quasi-cofibrations be denoted by $\cC$.
	The fact that $\cC$ satisfies axiom (2) and (4) of \Cref{def:cof cat} is then immediate from the definition.
	
	By the definition of quasi-cofibrations and the discussion from \cite[Section 3]{lack-model-structure-RS}, $\cC$ satisfies axiom (5) of \Cref{def:cof cat}. We now show that $\cC$, however, will not satisfy axiom (6).

    \begin{thm}\label{thm:we=x-htpy+cofb=subclass-of-induced-incl}
There is no cofibration category structure on $\cG$ with $\x$-homotopy equivalences as weak equivalences. %and cofibrations as any subclass of quasi-cofibrations.
\end{thm}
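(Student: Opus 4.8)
The plan is to show that the class $\cC$ of quasi-cofibrations fails axiom (6) of \Cref{def:cof cat}, and then to leverage this against \emph{every} admissible class of cofibrations. By \Cref{thm:general-we=x-htpy} together with \cite[Lemma 3.2 and Proposition 3.1]{lack-model-structure-RS}, any class of cofibrations making $(\cG,\Wx)$ a cofibration category must be a subclass of $\cC$. Hence it suffices to exhibit a single graph map $f$ admitting no factorization $A \xrightarrow{\,i\,} M \xrightarrow{\,p\,} B$ with $i$ a quasi-cofibration and $p \in \Wx$: since a smaller class of cofibrations only offers fewer such factorizations, one such $f$ defeats axiom (6) for \emph{every} admissible choice, which proves the theorem. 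The underlying principle of the counterexample is that a quasi-cofibration preserves the stiff part of its domain, so a map whose domain has a stiff subgraph too large to embed in the codomain's stiff subgraph can never be factored this way.

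First I would fix the counterexample $f\colon K_2 \to \ast$, where $K_2$ is the single edge (two vertices, one edge, no loops), $\ast$ is the single looped vertex, and $f$ is the unique graph map collapsing $K_2$ onto $\ast$. The relevant features are that $K_2$ is already stiff while the codomain has a one-vertex stiff subgraph. Next I would record the two facts that force the contradiction: (a) if $p\colon M \to \ast$ is a $\x$-homotopy equivalence, then $M \simeq_{\x} \ast$, so the stiff subgraph $M_s$ is isomorphic to the stiff subgraph of $\ast$, namely a single looped vertex, since $\x$-homotopy equivalent graphs have isomorphic stiff subgraphs (see \cite{x-htpy}); and (b) if $i\colon K_2 \hookrightarrow M$ is a quasi-cofibration, then by \Cref{quasicof} the folding of $M$ down to $M_s$ consists only of relative and restricted folds with respect to $K_2$, with no other folds available at any stage.

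The heart of the argument is to combine (a) and (b). Since $K_2$ is stiff, it admits no internal fold, so no restricted fold can collapse a vertex of $K_2$; and since the defining sequence permits no folds other than those relative to and restricted within $K_2$, no vertex of $K_2$ is ever folded onto a vertex outside $K_2$ either. Consequently $K_2$ survives intact and embeds as an induced subgraph of $M_s$. But then $2 = |V(K_2)| \le |V(M_s)| = 1$ by (a), which is absurd. Therefore $f$ admits no factorization through a quasi-cofibration followed by a weak equivalence, so $\cC$ violates axiom (6), and the reduction above completes the proof.

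The step I expect to be the main obstacle is (b): justifying rigorously that the quasi-cofibration condition genuinely forbids $K_2$ from being folded away, so that $K_2 \subseteq M_s$. This rests on reading \Cref{quasicof} carefully, namely on showing that the clause ``no other folds are possible at any stage'' precludes any fold moving a vertex of the stiff domain $K_2$; this is exactly where the stiffness of the chosen domain is used, and it is the point that must be argued before the cardinality comparison applies. Once this is secured, the contradiction $|V(K_2)| > |V(M_s)|$ is immediate, and the full generality of the conclusion follows formally from the containment of every admissible class of cofibrations in $\cC$.
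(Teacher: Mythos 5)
Your proposal is correct and follows essentially the same route as the paper: reduce to the class of quasi-cofibrations, then exhibit a map whose domain is stiff with more vertices than the stiff subgraph of the codomain, and derive a contradiction by noting that folding the middle object down to its stiff part would force a fold inside the stiff domain. The paper argues this for the general family $h\colon A \to B$ with $A,B$ stiff and $|V(B)|<|V(A)|$ (illustrated by $C_6 \to K_2$), while your $K_2 \to \ast$ is just a concrete instance of the same counting argument.
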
	
	
	\begin{proof}
        We have already shown that any class of cofibrations if it exists, is a subclass of quasi-cofibrations. 
		Let $h: A \to B$ be a graph map between two stiff graphs $A$ and $B$ such that $|V(B)| < |V(A)|$. We prove this theorem by showing that maps of the form $h$ cannot be factored as a quasi-cofibration followed by a $\x$-homotopy equivalence.
  
	On the contrary, we assume that there is such a factorization $A \xrightarrow{f} B' \xrightarrow{g} B$, {\it i.e.}, $h = gf$ with $f$ a quasi-cofibration and $g$ a $\x$-homotopy equivalence. 
		
	We first note that $B' \simeq_{\x} B$ and $B$ implies that $|V(B'_s)| = |V(B)|$, where $B'_s$ denotes a stiff subgraph of $B'$. 
		Therefore $|V(B)| < |V(A)|$ implies that after a few relative folds in $B'$ with respect to $A$, there will be some restricted folds as well, which is a contradiction to the fact that $A$ is stiff.
	\end{proof}

 For example, the natural map $f: C_6 \to K_2$ cannot be factored as a quasi-cofibration followed by a weak equivalence.

	\section{  Enlarging the class of weak equivalences}\label{sec:new weak equivalences}
	
	In \cite[Theorem 5.3]{droz} Droz points out that there is a continuum of model structures on the category of graphs, and each of them will give rise to a cofibration structure. We have now shown that none of these arise with $\x$-homotopy equivalences as the weak equivalences.
	Then a natural question is whether we can enlarge the class of weak equivalences in a way that they preserve the inherent property of $\x$-homotopy equivalences, {\it i.e.}, maps having isomorphic stiff subgraphs. 
	
	Recall that any $\x$-homotopy equivalence can be written as a composition of folds, isomorphism and unfolds. To see this, let $g: A \to B$ be a $\x$-homotopy equivalence. Since any two $\x$-homotopy equivalent graphs have isomorphic stiff subgraphs, $A_s \cong B_s$ \cite[Proposition 6.6]{x-htpy}, there is an  isomorphism $g': A_s \to B_s$ such that the following diagram commutes,
	\begin{equation*}
		\xymatrix{ A \ar[d]_{\fo_A} \ar[r]^g & B \ar[d]^{\fo_B} \\ 
			A_s \ar[r]_{g'} & B_s }
	\end{equation*}
    where $\fo_A$ and $\fo_B$ denotes map that folds $A$ and $B$ to their stiff subgraphs respectively.
	% We want weakly equivalent graphs to have the same stiff subgraphs. 
	% \sgnote{feels repetitive: A given graph can have more than one isomorphic stiff subgraph to which the graph folds down.} 
 % If we insist that a weak equivalence $f: A \to B$ maps every stiff subgraph of $A$ to a stiff graph of $B$ isomorphically, then this does not satisfy the 2 out of 3 property.
 We can enlarge the class $\Wx$ to contain graph maps which when restricted to a stiff subgraph of $A$  is an isomorphism onto its image. Further, we want the image of a stiff subgraph of $A$ to be isomorphic to a stiff subgraph of $B$. 
	
    \begin{defn} \label{def-new we}
	Let $\W$ be the class of maps in $\cG$ given by maps $f: A \to B$ such that, for any subgraph $T \subseteq A$, $T \simeq A_s$, $f|_{T}: T \to f(T)$ is an isomorphism and $f(T)\simeq B_s$.  
    \end{defn}
    
	Note that if $f:A \to B$ is in $\W$ and  $A$ folds down to a stiff subgraph $T$ it is possible that   $B$ does not fold down to $f(T)$.	
    However, this class does not satisfy the $2$ out of $3$ property and hence does not satisfy the $2$ out of $6$ property as we show next. 
 
    \begin{prop} \label{prop:Wn not satisfying 2 out of 3}
      The morphism class $\W$ in $\cG$ does not satisfy the $2$ out of $6$ property. 
        % The morphism class $\W$ in $\cG$ does not satisfy $2$ out of $6$ property. In particular, $(\cG,\W)$ is not a category with weak equival.
     \end{prop}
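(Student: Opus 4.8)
The plan is to contradict the `2 out of 6' axiom by exhibiting a single failure of the weaker `2 out of 3' property. This reduction is legitimate because $\W \supseteq \Wx$ contains every isomorphism, and in particular every identity $1_X$: inserting identities into a threefold composite turns `2 out of 6' into each instance of `2 out of 3'. Indeed, for $\phi\colon A\to B$ and $\psi\colon B\to C$, applying the axiom to $A\xrightarrow{\phi}B\xrightarrow{1_B}B\xrightarrow{\psi}C$ (whose two consecutive composites are $\phi$ and $\psi$) would force the total composite $\psi\phi\in\W$; applying it to $A\xrightarrow{\phi}B\xrightarrow{\psi}C\xrightarrow{1_C}C$ would force $\phi\in\W$ out of $\psi\phi,\psi\in\W$; and applying it to $A\xrightarrow{1_A}A\xrightarrow{\phi}B\xrightarrow{\psi}C$ would force $\psi\in\W$ out of $\phi,\psi\phi\in\W$. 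Thus if $\W$ satisfied `2 out of 6' it would be closed under composition and satisfy both cancellation properties, and it is enough to break any one of these.

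I would aim to break closure under composition, producing $f\colon A\to B$ and $g\colon B\to C$ in $\W$ with $gf\notin\W$. It is worth recording first why such a failure cannot come from genuine homotopy equivalences: every $\x$-homotopy equivalence is injective on each subgraph isomorphic to a stiff subgraph, because a single fold cannot merge two vertices of such a copy without violating its stiffness, and this property is preserved under composites of folds, unfolds and isomorphisms. Consequently, composing a map of $\W$ with an $\x$-homotopy equivalence again lands in $\W$, so the obstruction must come from a map $g\in\W\setminus\Wx$ that is an isomorphism on one stiff subgraph of $B$ (thereby certifying $g\in\W$) while being non-injective on another isomorphic copy of that stiff subgraph.

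This is precisely the phenomenon isolated in the Remark preceding the statement: a map $f\in\W$ can carry the stiff subgraph $A_s$ isomorphically onto a subgraph $S=f(A_s)\cong B_s$ which is not a stiff subgraph that $B$ folds onto. I would begin from such an $f$, taking $A=A_s$ to be stiff but non-rigid; an even cycle $C_{2n}$ is the natural candidate, being stiff yet not a core and admitting non-injective homomorphisms onto strictly smaller stiff graphs via its bipartition. I would then arrange $g\colon B\to C$ to restrict to an isomorphism on a genuine fold-target $B_s^{(0)}\cong B_s$ (so that $g\in\W$ is witnessed there, with $C_s\cong B_s$) while collapsing the misplaced copy $S$. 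Since $A$ is stiff it has only the one stiff subgraph $A_s$, and $gf|_{A_s}=g|_{S}\circ f|_{A_s}$ is then non-injective; hence $gf(A_s)$ has fewer vertices than $A_s\cong C_s$, so $gf(A_s)\not\cong C_s$ and $gf\notin\W$. Combined with the flanking-identity diagram of the first paragraph, this contradicts `2 out of 6'.

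The main obstacle is the explicit construction of $B$ together with the morphism $g$ realizing both requirements at once: $B$ must fold onto a stiff subgraph $B_s^{(0)}\cong C_{2n}$ on which $g$ is an isomorphism, yet must also contain a second copy $S\cong C_{2n}$ on which $g$ is non-injective, and the ambient adjacencies of $B$ must be compatible with $g$ being a graph morphism. The rigidity of graph homomorphisms makes these demands compete: if $S$ and $B_s^{(0)}$ share all but one vertex, the edge conditions force $g$ to be injective on $S$ as well, so the two copies must be kept sufficiently separated, which in turn makes it delicate to preserve $B_s\cong C_{2n}$. Carrying out this construction and verifying that $f$ and $g$ genuinely lie in $\W$ while $gf$ fails the stiff-subgraph condition is the heart of the argument; the even-cycle choice is what permits the collapse of $S$ without creating a loop, the analogous attempt with $B_s=K_2$ being blocked precisely because collapsing an edge would force a loop that a stiff $K_2$-target cannot absorb.
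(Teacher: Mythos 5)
Your reduction of ``2 out of 6'' to the three instances of ``2 out of 3'' via inserted identities is correct, but the instance you then commit to breaking --- closure under composition --- provably cannot fail for $\W$, so the construction you spend the rest of the argument searching for does not exist. The issue is the quantifier in \Cref{def-new we}: a map $g\colon B\to C$ lies in $\W$ only if \emph{every} subgraph $T\subseteq B$ with $T\cong B_s$ is carried isomorphically onto a copy of $C_s$. Your proposed $g$, which ``is an isomorphism on one stiff subgraph of $B$ (thereby certifying $g\in\W$) while being non-injective on another isomorphic copy,'' reads this condition existentially; under the actual universal reading such a $g$ is simply not in $\W$, and the ``main obstacle'' you flag in your final paragraph is insurmountable rather than delicate. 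Indeed the paper proves in \Cref{lem:propertiesWn}(1) that $\W$ is closed under composition: if $f,g\in\W$ and $T\cong A_s$, then $f(T)\cong B_s$, so the defining condition on $g$ applies to $f(T)$ in particular and $g(f(T))\cong C_s$. The same lemma also establishes the cancellation $g,gf\in\W\Rightarrow f\in\W$, so that direction is closed off as well.

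The only instance that can fail --- and the one the paper exploits --- is $f,gf\in\W\Rightarrow g\in\W$. Here the asymmetry works in your favour: $f\in\W$ and $gf\in\W$ only constrain $g$ on those copies of $B_s$ that arise as images of copies of $A_s$, while $B$ may contain further copies of $B_s$ on which $g$ is unconstrained. The paper's example takes $A$ stiff (a wedge of $C_5$ and $C_3$ along an edge), $f\colon A\hookrightarrow B$ an inclusion with $B_s\cong A$, and $g\colon B\to B$ fixing $f(A)$ pointwise (so $f$ and $gf$ lie in $\W$ trivially, $A$ being its own unique stiff subgraph) while collapsing a second copy $B[T]\cong B_s$ onto a triangle, which is not isomorphic to $B_s$; hence $g\notin\W$. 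Your underlying idea --- two copies of the stiff subgraph, one respected and one collapsed --- is exactly the right mechanism, but it must be aimed at this instance: the respected copy should be the image of $f$, used to certify $f,gf\in\W$, and the collapsed copy is what disqualifies $g$, not something $g$ survives membership of $\W$ in spite of.
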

    \begin{proof}
       Since 2 out of 6 property implies the 2 out of 3 property,
       to prove this statement, it is sufficient to give an example in $\W$ which does not satisfy $2$ out of $3$ property. 
		Let $A$ be a graph obtained from taking a wedge of $C_5$, with the graph $C_3$ along an edge (cf. \Cref{fig:counterexample to 2 out of 3 property }). Also, let $B$ and $C$ be the graphs as shown on the right side of \Cref{fig:counterexample to 2 out of 3 property }. It is easy to see that $A$ is a stiff graph, and hence it is its only stiff subgraph. Further, note that in $B$, the vertices $a,d$ folds to $x$; $c,e$ folds to $y$ and $b$ fold to $z$. Therefore, the stiff subgraph $B_s$ of $B$ is isomorphic to $A$. 
  
    \begin{figure}[H]
		\centering
		
		\begin{tikzpicture}[main_node/.style={circle,draw,minimum size=1em,inner sep=1.6pt},scale=0.7]
			
			\begin{scope}[shift={(-5,0)}]
				\node[main_node] (0) at (-4.2285714285714291, 2.785714285714286) {$x$};
				\node[main_node] (1) at (-2.828571428571429, -0.9) {$z$};
				\node[main_node] (2) at (-4.857142857142858, -0.8571428571428577) {$y$};
				\node[main_node] (3) at (-2.4, 2.4) {1};
				\node[main_node] (4) at (-1.800000000000001, 0.2428571428571434) {3};
				\node[main_node] (5) at (-1, 1.4000000000000004) {2};
				
				\path[draw, thick]
				(2) edge node {} (0) 
				(0) edge node {} (1) 
				(2) edge node {} (1) 
				(0) edge node {} (3) 
				(3) edge node {} (5) 
				(5) edge node {} (4) 
				(4) edge node {} (1) 
				;
				\node  at (-4,-3) {$A$};
			\end{scope}
			
			\begin{scope}
				\node[main_node] (0) at (-0.2285714285714291, 4.785714285714286) {$x$};
				\node[main_node] (1) at (-0.5857142857142854, 2.5142857142857142) {$a$};
				\node[main_node] (2) at (1.042857142857143, 1.8999999999999997) {$b$};
				\node[main_node] (3) at (3.828571428571429, -1) {$z$};
				\node[main_node] (4) at (-1.4857142857142858, 0.1428571428571428) {$d$};
				\node[main_node] (5) at (0.5, 0.11428571428571477) {$c$};
				\node[main_node] (6) at (-2.114285714285714, 1.9142857142857146) {$e$};
				\node[main_node] (7) at (-4.857142857142858, -0.8571428571428577) {$y$};
				\node[main_node] (8) at (2.2857142857142856, 4.642857142857142) {1};
				\node[main_node] (9) at (4.200000000000001, 0.8428571428571434) {3};
				\node[main_node] (10) at (3.885714285714286, 3.4000000000000004) {2};
				
				\path[draw, thick]
				(7) edge node {} (0) 
				(1) edge node {} (7) 
				(0) edge node {} (6) 
				(0) edge node {} (5) 
				(0) edge node {} (2) 
				(0) edge node {} (3) 
				(1) edge node {} (2) 
				(2) edge node {} (5) 
				(5) edge node {} (4) 
				(4) edge node {} (6) 
				(6) edge node {} (1) 
				(7) edge node {} (2) 
				(7) edge node {} (4) 
				(7) edge node {} (3) 
				(1) edge node {} (3) 
				(3) edge node {} (5) 
				(0) edge node {} (8) 
				(8) edge node {} (10) 
				(10) edge node {} (9) 
				(9) edge node {} (3) 
				;
				\node at (0,-3) {$B=C$};
			\end{scope}
			
		\end{tikzpicture}
		\caption{Counterexample (2 out of 3 property in $\W$)} \label{fig:counterexample to 2 out of 3 property }
		
	\end{figure}
 
  Now, define $f : A \to B$ as the inclusion map, and $g : B \to C$ be the map that sends $\{a,d,x\}$ to $x$, $\{c,e,y\}$ to $y$, $\{b,z\}$ to $z$, $1\mapsto 1, \ 2 \mapsto 2, \ 3 \mapsto 3$. By definition $f \in \W$ as $A$ itself is stiff and so is $gf$. If $T = \{a,b,c,d,e,x\} \subseteq V(B)$ and $B[T]$ is the subgraph of $B$ induced by the set $T$, then $B[T] \cong B_s$, a stiff subgraph of $B$, however, $g(B[T])$ is isomorphic to a $C_3$ which is not isomorphic to a stiff subgraph of $C$, thereby implying that $g$ does not belong to $\W$. Therefore, $\W$ does not satisfy the 2 out of 3 property.

\end{proof}

\begin{cor}
 The category $\cG$ with the class of morphisms $\W$ is not a category with weak equivalences. In particular, $\cG$ is not a cofibration category with $\cW$ as weak equivalences.
\end{cor}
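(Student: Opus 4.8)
The plan is to obtain this corollary directly from \Cref{prop:Wn not satisfying 2 out of 3} by unwinding the relevant definitions, so that no new construction is required. First I would recall that any reasonable notion of a category with weak equivalences imposes, at a minimum, that the distinguished class be closed under the $2$ out of $3$ property (and contain all isomorphisms). The explicit maps $f : A \to B$ and $g : B \to C$ built in the proof of \Cref{prop:Wn not satisfying 2 out of 3} satisfy $f \in \W$ and $gf \in \W$ while $g \notin \W$; this is exactly a failure of the $2$ out of $3$ property. Hence $\W$ cannot serve as the class of weak equivalences making $\cG$ a category with weak equivalences, which establishes the first assertion.

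For the second assertion, I would appeal to axiom (1) of \Cref{def:cof cat}, which demands that weak equivalences satisfy the $2$ out of $6$ property. Since a cofibration category is in particular a category with weak equivalences, the first assertion already rules this out; but I would also give the direct route, noting (as in the proof of \Cref{prop:Wn not satisfying 2 out of 3}) that the $2$ out of $6$ property implies the $2$ out of $3$ property, so that the failure of the latter forces the failure of the former. Concretely, feeding the counterexample into the $2$ out of $6$ diagram $A \xrightarrow{\mathrm{id}_A} A \xrightarrow{f} B \xrightarrow{g} C$, the two composites $f \circ \mathrm{id}_A = f$ and $g \circ f = gf$ lie in $\W$, yet the conclusion of the property would demand $g \in \W$, a contradiction. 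Therefore $\cG$ with $\W$ as weak equivalences cannot satisfy axiom (1) and cannot be a cofibration category, irrespective of the choice of cofibrations.

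I do not expect any genuine obstacle here, since all the mathematical content is contained in the preceding proposition and the corollary merely reformulates its consequence. The only point I would state carefully is the implication ``$2$ out of $6$ $\Rightarrow$ $2$ out of $3$'': it relies on identities lying in $\W$, which holds because for $\mathrm{id}_A : A \to A$ and any $T \subseteq A$ with $T \simeq A_s$ the restriction $\mathrm{id}_A|_T$ is an isomorphism and $\mathrm{id}_A(T) = T \simeq A_s$, so that a $2$ out of $6$ diagram can be specialized to a $2$ out of $3$ one by inserting an identity. With this remark in place the proof reduces to citing \Cref{prop:Wn not satisfying 2 out of 3} and \Cref{def:cof cat}.
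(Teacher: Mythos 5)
Your proposal is correct and matches the paper's (implicit) argument: the corollary is stated as an immediate consequence of \Cref{prop:Wn not satisfying 2 out of 3}, exactly as you derive it. Your additional remark that identities lie in $\W$ (needed to specialize $2$ out of $6$ to $2$ out of $3$) is a careful touch but does not change the route.
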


However, $\W$ does have several nice properties that weak equivalences are expected to satisfy. 

\begin{lemma} \label{lem:propertiesWn}
    Let $\W$ be the class of morphisms as defined in \Cref{def-new we}. Then
    \begin{enumerate}
    
        \item  $\W$ is closed under compositions.
        \item \label{iso}  All isomorphisms, folds, unfolds are in $\W$, hence all the $\x$-homotopy equivalences are in $\W$.
        \item  Not every map in $\W$ is a $\x$-homotopy equivalence. 
        \item  Given composable graph morphisms $f$ and $g$, if  $g, gf \in \W$ then $f\in \W$.
     \end{enumerate}
\end{lemma}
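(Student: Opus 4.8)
The plan is to verify each of the four claims in \Cref{lem:propertiesWn} directly from \Cref{def-new we}, using the key structural fact (recorded above and in \cite[Proposition 6.6]{x-htpy}) that all stiff subgraphs of a given graph are mutually isomorphic, so that the symbols $A_s$, $B_s$, etc. are well-defined up to isomorphism. The membership condition ``$f\in\W$'' unpacks to: for \emph{every} copy $T\subseteq A$ of $A_s$, the restriction $f|_T$ is an isomorphism onto $f(T)$, and $f(T)\simeq B_s$. I would keep this ``for all stiff copies'' quantifier in the foreground throughout, since it is what makes closure arguments slightly delicate.

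\medskip

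\noindent\textbf{For (1),} given $f\colon A\to B$ and $g\colon B\to C$ both in $\W$, I would take any stiff copy $T\simeq A_s$ in $A$. Since $f\in\W$, $f|_T$ is an isomorphism onto $f(T)$ and $f(T)\simeq B_s$, so $f(T)$ is a stiff copy in $B$; then applying $g\in\W$ to the stiff copy $f(T)$ gives that $g|_{f(T)}$ is an isomorphism onto $g(f(T))$ with $g(f(T))\simeq C_s$. Composing the two isomorphisms $T\to f(T)\to g(f(T))$ shows $(gf)|_T$ is an isomorphism onto its image with image $\simeq C_s$, so $gf\in\W$. For (2), isomorphisms are trivial; for a fold $\fo_A\colon A\to A-v$, a stiff copy $T\simeq A_s$ may be taken to lie in $A-v$ (a graph's stiff subgraph survives any fold), and the fold restricts to the identity on such a $T$, with $f(T)=T\simeq A_s\simeq (A-v)_s$; unfolds are handled symmetrically, and since every $\x$-homotopy equivalence factors as folds/isomorphisms/unfolds (as recalled just before \Cref{def-new we}), closure under composition from (1) yields $\Wx\subseteq\W$.

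\medskip

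\noindent\textbf{For (3),} I would simply reuse the example already built in the proof of \Cref{prop:Wn not satisfying 2 out of 3}: the composite $gf\colon A\to C$ there lies in $\W$ but cannot be a $\x$-homotopy equivalence, because $|V(A_s)|\neq|V(C_s)|$ would fail — more directly, one can exhibit a map in $\W$ sending a stiff graph to a non-isomorphic stiff graph, which no $\x$-homotopy equivalence does (since those preserve stiff subgraphs up to isomorphism). \textbf{For (4),} suppose $g,gf\in\W$ and let $T\simeq A_s$ be a stiff copy in $A$. From $gf\in\W$, the restriction $(gf)|_T=g\circ(f|_T)$ is \emph{injective} on $T$, which forces $f|_T$ to be injective, hence an isomorphism onto $f(T)$; it remains to show $f(T)\simeq B_s$. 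Here I would argue that $f(T)$, being the isomorphic image of a stiff graph, is itself stiff, and that $g$ restricted to the stiff copy $f(T)$ must send it isomorphically onto a stiff copy of $C_s$ of the same cardinality as $A_s$ — combined with $|V(B_s)|$ being pinned between these via $g\in\W$, one concludes $f(T)$ is a full stiff subgraph of $B$, i.e.\ $f(T)\simeq B_s$.

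\medskip

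\noindent\textbf{The main obstacle} I anticipate is the final cardinality/isomorphism bookkeeping in part (4): establishing $f(T)\simeq B_s$ rather than merely that $f(T)$ is \emph{some} stiff subgraph. The subtlety is that $\W$-membership requires the image of a stiff copy to be isomorphic to the \emph{whole} stiff subgraph of the codomain, and a priori $f(T)$ could be a proper stiff subgraph of $B$ that is strictly smaller. I expect the resolution to come from comparing $|V(f(T))|=|V(A_s)|$ with $|V(B_s)|$ and $|V(C_s)|$: since $g$ is injective on the stiff copy $f(T)$ and $gf\in\W$ forces $g(f(T))\simeq C_s\simeq (gf)(T)$-image of a copy of $A_s$, a counting argument should squeeze $f(T)$ to have exactly $|V(B_s)|$ vertices, completing the proof.
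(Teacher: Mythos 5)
Your parts (1), (2) and (4) follow essentially the same route as the paper: for (1) you apply the defining condition of $\W$ to the stiff copy $f(T)\subseteq B$; for (2) you check folds/unfolds directly and invoke the factorization of a $\x$-homotopy equivalence into folds, isomorphisms and unfolds together with closure under composition; for (4) you extract injectivity of $f|_S$ from the isomorphism $(gf)|_S$. In (4) your cardinality bookkeeping is more roundabout than necessary: the clean finish is the isomorphism chain $f(S)\cong S\cong A_s\cong C_s\cong B_s$, where $A_s\cong C_s$ comes from $gf\in\W$ and $B_s\cong C_s$ from $g\in\W$; no counting of vertices is needed, and counting alone would not give the required isomorphism $f(S)\simeq B_s$ anyway.

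Part (3) has a genuine gap: both of your proposed justifications fail. First, the composite $gf\colon A\to C$ from the proof of \Cref{prop:Wn not satisfying 2 out of 3} \emph{is} a $\x$-homotopy equivalence --- it is precisely the inclusion of $A$ into $C$ as the stiff subgraph that $C$ folds down to ($a,d\mapsto x$, $c,e\mapsto y$, $b\mapsto z$), and the inclusion of a stiff subgraph is a homotopy inverse of the fold map; moreover $A_s\cong C_s$ there, so your appeal to $|V(A_s)|\neq |V(C_s)|$ is also factually wrong for that example. Second, your ``more direct'' alternative --- a map in $\W$ sending a stiff graph to a graph with non-isomorphic stiff subgraph --- is impossible by definition: any $f\in\W$ forces $A_s\cong f(T)\cong B_s$, so domain and codomain of a $\W$-map always have isomorphic stiff subgraphs. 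What is actually needed (and what the paper supplies) is a map $f\colon A\to B$ between $\x$-homotopy \emph{equivalent} graphs (with $A_s\cong B_s$) such that the specific morphism $f$ admits no $\x$-homotopy inverse; the paper's example takes $A$ to be a triangle with a pendant path $1$--$4$--$5$ attached, $B=K_3$, and $f$ the map collapsing the path onto the triangle ``the wrong way'' ($4\mapsto b$, $5\mapsto c$). Producing such an example requires an argument that the map itself is not an equivalence even though the graphs are, and your proposal contains no candidate for this.
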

\begin{proof}
\begin{enumerate}
    \item Compositions are preserved in $\W$. Let $f:A\to B$ and $g:B \to C$ be in $\W$. Then for any subgraph $T$ of $A$ isomorphic to $A_s$, $T \to f(T)$ is an isomorphism such that $f(T)$ is isomorphic to $B_s$. Then $g|_{f(T)}$ is an isomorphism onto $g(f(T))$ and $g(f(T))$ is isomorphic to $C_s$. 
			
    \item It is straightforward to check that all the isomorphisms, folds and unfolds belong to $\W$. Note that every $\x$-homotopy equivalence $A \to B$ is a composition of folds, isomorphisms and unfolds; and since all of these are in $\W$ which is closed under taking compositions, every $\x$-homotopy equivalence is in $\W$.

		\item  To show that not every weak equivalence is a $\x$-homotopy equivalence, consider the graphs $A, B$ with $V(A) = \{1,2,3,4,5\}, \ E(A) = \{12,23,13, 14, 45\}$, and $V(B) = \{a,b,c\}, \ E(B) = \{ab,bc,ac\}$ (see \Cref{fig:we - x-htpy}). 

        \begin{figure}[H]
            \centering
        \begin{tikzpicture}[main_node/.style={circle,draw,minimum size=1em,inner sep=1pt]}]

\node[main_node] (0) at (-3.10000010899135, 4.842857142857143) {1};
\node[main_node] (1) at (-3.9857142857142858, 3.2571428571428576) {2};
\node[main_node] (2) at (-2.242857251848493, 3.2571428571428576) {3};
\node[main_node] (3) at (-4.842857142857143, 2.7428571428571424) {4};
\node[main_node] (4) at (-1.8428570338657924, 2.7428571428571424) {5};
\node[main_node] (5) at (2.0999996730259483, 4.842857142857143) {$a$};
\node[main_node] (6) at (1.20714286804199215, 3.2) {$b$};
\node[main_node] (7) at (2.8285711015973764, 3.2) {$c$};

\node at (-3,2) {$A$};
\node at (2,2) {$B$};

 \path[draw, thick]
(0) edge node {} (1) 
(1) edge node {} (2) 
(2) edge node {} (0) 
(3) edge node {} (4) 
(3) edge node {} (0) 
(5) edge node {} (6) 
(6) edge node {} (7) 
(7) edge node {} (5) 
;

\end{tikzpicture}     
            \caption{Map in $\W \setminus \Wx$}
            \label{fig:we - x-htpy}
        \end{figure}

	Clearly $A_s \cong B_s \cong K_3$, a   complete graph on 3 vertices. 
        Let $f : A \to B$ be the map that sends $1 \mapsto a, \{2,4\} \mapsto b$ and $\{3,5\} \mapsto c$. 
			Then $f$ maps $A_s$ isomorphically onto $B_s = B$ but $f$ is not a $\x$-homotopy equivalence.

       \item 	Let  $f:A \to B $, $g:B \to C $ be graph maps.  Let $g$ and $gf$ be in $\W$. Let $S \subseteq A$ be a subgraph such that $S \cong A_s$. Then $gf|_{S}$ maps $S$ isomorphically to $gf(S)$, a subgraph isomorphic to the stiff subgraph of $C$. Also, $gf|_S$ being an isomorphism implies that $f|_S$ is injective and hence $f|_S : S \to f(S)$ is an isomorphism. Since $S \cong A_s \cong B_s$, $S \cong f(S)$ imply that $f(S) \cong B_s$. Thus $f \in \Wn$ .

\end{enumerate} 
\end{proof}

 We now show that we cannot extend this class  $\W$ to a class $\Wn$ which satisfies the $2$ out of $6$ property, and morphisms have isomorphic stiff subgraphs. 
    
    \begin{thm}\label{thm:Endresult}
  Let $\Wn$ be the smallest class of morphisms containing $\W$  and satisfying the $2$ out of $6$ property. Then $\Wn$ contains graph morphisms where the domain and codomain have non-isomorphic stiff subgraphs. 
    \end{thm}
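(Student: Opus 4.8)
The plan is to produce an explicit pair of composable morphisms that forces a ``bad'' map into $\Wn$ via the $2$ out of $6$ property. Recall that $2$ out of $6$ says: given $A \xrightarrow{f} B \xrightarrow{g} C \xrightarrow{h} D$ with $gf$ and $hg$ both weak equivalences, all of $f,g,h,hgf$ are weak equivalences. So to force an undesirable map into $\Wn$, I would arrange three composable maps $f,g,h$ so that the two ``outer'' composites $gf$ and $hg$ both lie in $\W$ (and hence in $\Wn$, since $\W \subseteq \Wn$), while one of the individual maps — say $g$, or $hgf$ — is a morphism between graphs with \emph{non-isomorphic} stiff subgraphs. Because $\Wn$ is the smallest class containing $\W$ and closed under the $2$ out of $6$ inference, that individual map is then compelled to belong to $\Wn$, proving the theorem.

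The key steps, in order, are as follows. First I would choose graphs so that $A$ and $C$ have isomorphic stiff subgraphs but $B$ and $D$ do not, or more precisely, I would engineer the situation so that the two composites $gf: A \to C$ and $hg: B \to D$ each map a stiff subgraph isomorphically onto the target's stiff subgraph (putting them in $\W$), yet the map $g: B \to C$ sends $B$ and $C$ across a stiff-subgraph mismatch. The natural source of such examples is the counterexample already built in \Cref{prop:Wn not satisfying 2 out of 3}: there we had $f \in \W$, $gf \in \W$, but $g \notin \W$ precisely because $g$ collapsed a copy of $B_s$ onto a $C_3$, which is not a stiff subgraph of the target. I would reuse or adapt that configuration, adding a fourth graph $D$ and a map $h: C \to D$ chosen so that $hg$ also lands in $\W$; then $2$ out of $6$ forces $g$ into $\Wn$ even though $g$ is the offending map. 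Second, I would verify directly from \Cref{def-new we} that the two composites satisfy the defining condition of $\W$ (restriction to a copy of the source's stiff subgraph is an isomorphism onto an image isomorphic to the target's stiff subgraph). Third, I would exhibit that the map whose membership is forced has domain and codomain with non-isomorphic stiff subgraphs, completing the argument.

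The main obstacle I anticipate is constructing the fourth map $h$ so that $hg \in \W$ \emph{simultaneously} with the requirement that the forced map genuinely crosses a stiff-subgraph mismatch; the constraint that both outer composites lie in $\W$ is fairly rigid, since each composite must carry a full stiff subgraph of its source isomorphically. In the configuration of \Cref{prop:Wn not satisfying 2 out of 3} one already has $f, gf \in \W$ with $g \notin \W$, so the cleanest route may be to invoke property \eqref{iso} of \Cref{lem:propertiesWn} (all $\x$-homotopy equivalences, and in particular folds, lie in $\W$) to supply an auxiliary map that makes the second composite a weak equivalence essentially for free, rather than building a delicate new graph $D$. Concretely, if I can choose $h$ to be a fold or an isomorphism, then $hg \in \W$ reduces to checking $g$ composed with a fold behaves well on stiff subgraphs, which is more tractable. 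Once the two composites are confirmed in $\W$, the conclusion that $g$ (with its mismatched stiff subgraphs) must lie in $\Wn$ is immediate from the minimality and closure hypotheses defining $\Wn$, and this is exactly the assertion of the theorem.
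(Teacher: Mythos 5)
Your high-level mechanism is the same one the paper uses: pick a composable triple $A \xrightarrow{f} B \xrightarrow{g} C \xrightarrow{h} D$ with both outer composites $gf, hg$ in $\W$, and let the $2$ out of $6$ property force the individual maps into $\Wn$. But the concrete plan has a structural flaw. You propose to recycle the configuration of \Cref{prop:Wn not satisfying 2 out of 3} and force its map $g$ into $\Wn$. In that example, however, $B=C$: the map $g$ fails to lie in $\W$ only because it collapses one particular copy of $B_s$ onto a $C_3$, \emph{not} because its domain and codomain have non-isomorphic stiff subgraphs. Forcing that $g$ into $\Wn$ therefore proves nothing toward the theorem, whose conclusion is specifically about morphisms whose domain and codomain have non-isomorphic stiff subgraphs. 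Worse, the obstruction is global: $gf \in \W$ forces $A_s \cong C_s$ and $hg \in \W$ forces $B_s \cong D_s$, so each of the four forced maps $f,g,h,hgf$ crosses a stiff-subgraph mismatch if and only if $A_s \not\cong B_s$. Starting from the configuration of \Cref{prop:Wn not satisfying 2 out of 3}, where $B=C$, one gets $A_s \cong C_s = B_s$, and then no choice of $D$ and $h$ (fold, isomorphism, or otherwise) can produce a mismatch anywhere in the chain. The route is dead, not merely incomplete.

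What is missing is an actual construction realizing $A_s \cong C_s$, $B_s \cong D_s$, and $A_s \not\cong B_s$ simultaneously with $gf, hg \in \W$. The paper supplies one using graphs with loops: a chain of induced inclusions $A \subset B \subset C \subset D$ in which $A$ (a single looped vertex) and $B$ (three vertices) are stiff, $C$ folds down to $A$, and $D$ folds down to a copy of $B$. Then $gf$ embeds the stiff graph $A$ isomorphically onto a stiff subgraph of $C$, and $hg$ embeds the stiff graph $B$ isomorphically onto a stiff subgraph of $D$, so both lie in $\W$ by \Cref{def-new we}; the forced map $f: A \to B$ then joins graphs with non-isomorphic stiff subgraphs. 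Your proposal identifies the right inference but does not produce, and cannot produce from the example you cite, the graphs that make it run.
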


    \begin{proof}
        We construct graphs $A,B,C,D$ and composable graph maps $f:A \to B$, $g: B \to C$ and $h:C \to D$ such that the domain and codomain of $f$ have non isomorphic stiff subgraphs but $gf,hg \in \Wn$.
        Consider the graph $D$ with the vertex set $V(D) = \{1,2,3,4,5\}$ and the edge set $E(D) = \{11, 12, 14, 23, 24, 25, 33, 34, 44, 55\}$. 
        Let $A,B,C$ be induced subgraphs of $D$ on vertex sets $\{1\}, \ \{1,2,3\}, \ \{1,2,3,4\}$ respectively (see \Cref{fig:unwanted maps in W}). 
        Then $A \subset B \subset C \subset D$.
        Let $f: A \to B$, $g: B \to C$ and $h: C \to D$ be the induced inclusions. Then  we note that the graphs $A$ and $B$ are stiff. Moreover, in $C$, the sequence of folding 3 to 4, then 4 to 1, and finally 2 to 1 implies that the stiff subgraph $C_s$ of $C$ is $A$. 
        Similarly the sequence of folding 3 to 4 followed by 4 to 1 in $D$ gives the stiff subgraph $D_s$ of $D$ which is isomorphic to $B$. 
        
 \begin{figure}[H]
     \centering
     \tikzstyle{black1}=[circle, draw, fill=black!100, inner sep=0pt, minimum width=4pt]
	\tikzstyle{ver}=[]
	\tikzstyle{extra}=[circle, draw, fill=black!00, inner sep=0pt, minimum width=2pt]
	\tikzstyle{edge} = [draw,thick,-]
	\tikzstyle{light} = [draw, gray,-]
		
    \begin{tikzpicture}[scale=0.5]
        \begin{scope}[shift={(0,0)}]
            \node (0) at (-3,0) {1};
            \draw[] (0) circle [radius=.5];
            \draw (0) to[out=250,in=290, distance = 2cm] (0);
            \node at (-3, -5) {$A$};
        \end{scope}
        \begin{scope}[shift={(3,0)}]
            \node (0) at (-3,0) {1};
            \node (1) at (0,3) {2};
            \node (2) at (3,0) {3};
            % \node (3) at (0,-1) {4};
            % \node (4) at (-3,3) {5};
            \draw[] 
                (0) circle [radius=.5]
                (1) circle [radius=0.5]
                (2) circle [radius=0.5];
                % (3) circle [radius=0.5];
                % (4) circle [radius=0.5];

            \draw (0) to[out=250,in=290, distance = 2cm] (0);
            \draw (2) to[out=250,in=290, distance = 2cm] (2);
            % \draw (3) to[out=250,in=290, distance = 2cm] (3);
            % \draw (4) to[out=160,in=200, distance = 2cm] (4);

            \path[edge]
                % (3) edge node {} (1) 
                (2) edge node {} (1) 
                (0) edge node {} (1) ;
                % (1) edge node {} (4) 
                % (0) edge node {} (3) ;
                % (3) edge node {} (2);
            \node at (0, -5) {$B$};
        \end{scope}

       \begin{scope}[shift={(12,0)}]
            \node (0) at (-3,0) {1};
            \node (1) at (0,3) {2};
            \node (2) at (3,0) {3};
            \node (3) at (0,-1) {4};
            % \node (4) at (-3,3) {5};
            \draw[] 
                (0) circle [radius=.5]
                (1) circle [radius=0.5]
                (2) circle [radius=0.5]
                (3) circle [radius=0.5];
                % (4) circle [radius=0.5];

            \draw (0) to[out=250,in=290, distance = 2cm] (0);
            \draw (2) to[out=250,in=290, distance = 2cm] (2);
            \draw (3) to[out=250,in=290, distance = 2cm] (3);
            % \draw (4) to[out=160,in=200, distance = 2cm] (4);

            \path[edge]
                (3) edge node {} (1) 
                (2) edge node {} (1) 
                (0) edge node {} (1) 
                % (1) edge node {} (4) 
                (0) edge node {} (3) 
                (3) edge node {} (2);
            \node at (0, -5) {$C$};

        \end{scope}

        \begin{scope}[shift={(22,0)}]
            \node (0) at (-3,0) {1};
            \node (1) at (0,3) {2};
            \node (2) at (3,0) {3};
            \node (3) at (0,-1) {4};
            \node (4) at (-3,3) {5};
            \draw[] 
                (0) circle [radius=.5]
                (1) circle [radius=0.5]
                (2) circle [radius=0.5]
                (3) circle [radius=0.5]
                (4) circle [radius=0.5];

            \draw (0) to[out=250,in=290, distance = 2cm] (0);
            \draw (2) to[out=250,in=290, distance = 2cm] (2);
            \draw (3) to[out=250,in=290, distance = 2cm] (3);
            \draw (4) to[out=160,in=200, distance = 2cm] (4);

            \path[edge]
                (3) edge node {} (1) 
                (2) edge node {} (1) 
                (0) edge node {} (1) 
                (1) edge node {} (4) 
                (0) edge node {} (3) 
                (3) edge node {} (2);
            \node at (0, -5) {$D$};
        \end{scope}

% \draw [edge] (-4.3, 3.057142857142857) circle [radius=0.5];	

        \end{tikzpicture}     \caption{The 2 out of 6 implying existence of some unwanted maps in $\Wn$}
        \label{fig:unwanted maps in W}
     \end{figure}

        With this, it is clear that $gf$ and $hg$ both belong to $\W$, and hence belong to $\Wn$. Thus by the 2 out of 6 property of $\Wn$, we have $f,g,h,hgf \in \Wn$. In particular, $f: A \to B $ is a weak equivalence. However, the stiff subgraphs of $A$ and $B$ are non-isomorphic.
    \end{proof}

	% With this, we see that while trying to enlarge the class of weak equivalences to allow the compatibility for a plausible existence of a cofibration category, we lose out on the properties that initially held true.

Therefore via \Cref{thm:Endresult}, we have shown that it is not possible to extend the class of  $\x$-homotopy  equivalences in a manner that gives rise to a cofibration category where weak equivalences distinguish graphs having non-isomorphic stiff subgraphs.

  %{\color{red} %It is observed that the category $(\cG, \W)$, as described in \Cref{prop:Wn not a cofib cat}, does not meet the requirements to be classified as a cofibration category due to its failure to adhere to the 2 out of 3 axiom, thereby leading to the violation of the 2 out of 6 axiom. Given that $\W$ possesses all other desirable properties as outlined in \Cref{lem:propertiesWn}, it was a logical progression to expand the class $\W$ to $\W$, where the more stringent 2 out of 6 axiom is satisfied. However, as demonstrated by \Cref{thm:Endresult}, in the process of broadening the category of "weak equivalences," the basic purpose of establishing the ability to designate two graphs as "weakly equivalent" based on the equivalence of their stiff subgraphs has been compromised. This observation demonstrates that it is not possible to extend the category of weak equivalences in a manner that preserves both - the axioms and the desirable structure. }
	
% \subsection*{Data availability Statement}	
% 	No datasets were generated or analysed for this paper. 	

	\end{document}